\documentclass[12pt]{amsart}

\usepackage[a4paper,margin=3cm]{geometry}
\usepackage{amsmath,amssymb,amsthm}
\usepackage{stmaryrd}
\usepackage[T1]{fontenc}
\usepackage[utf8]{inputenc}
\usepackage{lmodern}
\usepackage{microtype}
\usepackage[hidelinks]{hyperref}
\newcommand{\DD}{\mathbb D}
\newcommand{\CC}{\mathbb C}
\newcommand{\RR}{\mathbb R}
\newcommand{\GG}{\mathbb G_2}
\newcommand{\cR}{\mathcal R}
\newcommand{\Aut}{\operatorname{Aut}}
\newcommand{\supp}{\operatorname{supp}}

\newtheorem{theorem}{Theorem}

\newtheorem{lemma}[theorem]{Lemma}

\begin{document}

\title{Isometries in the symmetrized bidisc, II}
\author{Armen Edigarian}
\address{Faculty of Mathematics and Computer Science, Jagiellonian University, ul. \L ojasiewicza 6, 30-348 Krak\'ow, Poland}
\email{armen.edigarian@uj.edu.pl}
\keywords{Symmetrized bidisc, Kobayashi distance, Kobayashi--Royden metric, automorphism orbit}
\subjclass[2020]{Primary 32F45; Secondary 32C30, 53C60}
\thanks{Funded by the National Science Centre, Poland under the Weave UNISONO, UMO-2025/07/Y/ST1/00146}

\begin{abstract}
We prove that every map $f:U\to\GG$ preserving the Poincar\'e distance and the Kobayashi distance is
holomorphic or anti-holomorphic, 
where $U$ is a connected open subset of the unit disc. 
We also study the nonroyal automorphism orbits of $\GG$. A $C^1$ map on a connected relatively open part of such an orbit which preserves the restriction of the
ambient Kobayashi--Royden metric is the restriction of a global automorphism or
anti-automorphism of $\GG$.  The same conclusion holds for maps preserving the
ambient Kobayashi distance.
\end{abstract}

\maketitle

\section{Introduction}

The symmetrized bidisc is
\[
 \GG=\{(z+w,zw):z,w\in\DD\}\subset\CC^2.
\]
Its Carath\'eodory and Kobayashi distances coincide, and the corresponding
Carath\'eodory--Reiffen and Kobayashi--Royden metrics coincide; see
\cite{AglerYoung2004,Costara2004}.  We write $k_{\GG}$ for the common distance
and $\kappa_{\GG}$ for the common infinitesimal metric.  The Poincar\'e distance
of $\DD$ is denoted by $\rho$, with infinitesimal metric
$|v|/(1-|\lambda|^2)$.

In \cite{Edigarian2024} it was proved that every $C^1$ infinitesimal isometry
from the disc to $\GG$ is holomorphic or anti-holomorphic.  The first result removes the regularity assumption.
\begin{theorem}\label{thm:disc}
Let $U\subset\DD$ be connected and open and let $f:U\to\GG$ satisfy
\[
 k_{\GG}(f(\lambda_1),f(\lambda_2))=\rho(\lambda_1,\lambda_2),\qquad \lambda_1,\lambda_2\in U.
\]
Then $f$ is holomorphic or anti-holomorphic on $U$.
\end{theorem}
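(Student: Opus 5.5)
Since $f$ preserves distance, it is Lipschitz with respect to the Poincaré distance on $U$ and the Kobayashi distance on $\GG$. Both distances are locally comparable to the Euclidean one, so $f$ is locally Lipschitz in the Euclidean sense. By Rademacher's theorem $f$ is then differentiable almost everywhere on $U$. The plan is to show two things: first, at each point of differentiability the differential is an infinitesimal isometry for the pair $(\rho,\kappa_{\GG})$, and hence (by the pointwise linear algebra from \cite{Edigarian2024}) is complex linear or complex antilinear; second, this a.e.\ information upgrades to global holomorphy or anti-holomorphy. This avoids any appeal to $C^1$ regularity.

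\emph{Step 1: infinitesimal isometry a.e.} Fix a point $\lambda_0$ where $f$ is differentiable, and write $d f_{\lambda_0}v=\partial f\, v+\bar\partial f\,\bar v$. Apply the distance identity to $\lambda_1=\lambda_0$, $\lambda_2=\lambda_0+tv$, divide by $t$, and let $t\to0^+$. Here we use that $k_{\GG}$ is the integrated form of $\kappa_{\GG}$, and that $\kappa_{\GG}$ is continuous because $\GG$ is taut. The outcome is
\[
\kappa_{\GG}\bigl(f(\lambda_0);\partial f\,v+\bar\partial f\,\bar v\bigr)=\frac{|v|}{1-|\lambda_0|^2}\qquad (v\in\CC).
\]
We then invoke the pointwise statement of \cite{Edigarian2024}: a real-linear map $\CC\to\CC^2$ that is an isometry from the Poincaré norm to the Kobayashi--Royden norm at one point must be either complex linear ($\bar\partial f=0$) or antilinear ($\partial f=0$). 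The argument there is pointwise and works at the level of differentials, so it should apply verbatim. This gives a measurable partition of $U$, up to a null set, into $A=\{\bar\partial f=0\}$ and $B=\{\partial f=0\}$.

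\emph{Step 2: rigidity, the main obstacle.} The difficulty is that an a.e.\ alternative does not by itself rule out a map that switches between the two types on wild sets. To handle this I would use global information from the extremal geometry of $\GG$. For any two points $\lambda_1,\lambda_2\in U$, the images satisfy $k_{\GG}(f(\lambda_1),f(\lambda_2))=\rho(\lambda_1,\lambda_2)$. Take the Kobayashi extremal disc $\varphi:\DD\to\GG$ through $f(\lambda_1)$ and $f(\lambda_2)$; it is also a Carathéodory extremal, with a left inverse $\Phi_\omega$ from the family $\Phi_\omega(s,p)=(2\omega p-s)/(2-\omega s)$. Then $h=\Phi_\omega\circ f$ is a map $U\to\DD$ that is Lipschitz for $\rho$ and realizes equality at the pair $(\lambda_1,\lambda_2)$. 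The plan is to show that $h$ is forced to be a Poincaré isometry along the whole hyperbolic geodesic segment joining $\lambda_1$ and $\lambda_2$ (if that segment lies in $U$). The reason is that equality in the triangle inequality propagates along geodesics for a $1$-Lipschitz map. A Poincaré-isometric map between discs is a Möbius map or its conjugate. Comparing two such maps through nearby triples shows that the holomorphic/antiholomorphic type is locally constant along small geodesic triangles. Hence $A$ or $B$ has full measure locally, and by connectedness globally.

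\emph{Step 3: conclusion.} If $\bar\partial f=0$ a.e., then $f$ is Lipschitz, hence $W^{1,\infty}_{\mathrm{loc}}$, with vanishing distributional $\bar\partial$. By Weyl's lemma it is holomorphic. Likewise $\partial f=0$ a.e.\ gives that $\bar f$ is holomorphic. The delicate point is Step 2: constancy of type must be extracted from the two-point equalities, using the uniqueness of Carathéodory extremals in $\GG$ (modulo the royal-variety exceptions). The degenerate case where the geodesic lies in the royal variety $\{s^2=4p\}$ will need separate treatment. There $f$ essentially reduces to a map into a disc, and the classical fact about Poincaré isometries of the disc applies.
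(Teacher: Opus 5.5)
\textbf{Step 2 has a genuine gap.} The extremal $\Phi_\omega$ depends on the pair $(f(\lambda_1),f(\lambda_2))$. So your propagation argument only shows that $\Phi_\omega\circ f$ maps the hyperbolic segment $[\lambda_1,\lambda_2]$ isometrically onto a hyperbolic segment. That is information on a one-dimensional null set, and it carries no holomorphic/anti-holomorphic type: $\lambda\mapsto\lambda$ and $\lambda\mapsto\overline\lambda$ agree on $(-1,1)$. In particular it says nothing about $df_\lambda$ at almost every $\lambda$.

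The classification ``Poincar\'e isometry $=$ M\"obius map or its conjugate'' needs isometry on an open set. That would require one $\omega$ extremal for all pairs of points of $f(U')$ with $U'$ open. Given your Step 1, that is essentially the conclusion you are trying to prove: $f(U')$ lies in one complex geodesic. The sentence about comparing maps through nearby triples is where the whole argument would have to happen, and no mechanism is given.

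What is missing is a tool that turns the a.e.\ dichotomy for $df$ into a global one, using injectivity. The paper does this in two moves.
\begin{enumerate}
\item Take the push-forward $T=f_\#\llbracket B\rrbracket$, restricted to a neighbourhood $W$ of $f(\overline{B'})$ disjoint from $f(\partial B)$. It is a closed rectifiable $2$-current with $\mathcal H^3(\supp T)=0$. It is of type $(1,1)$ because $df_\lambda(\CC)$ is a complex line a.e. By Harvey--Shiffman it is a holomorphic $1$-chain, so $f(B')$ lies in a one-dimensional complex analytic set $A$.
\item In a local holomorphic coordinate $\chi$ on the regular part of $A$, the map $\chi\circ f$ is an injective bi-Lipschitz map of a planar disc. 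By invariance of domain it is a homeomorphism onto an open set, so it either preserves or reverses orientation. This forces $\overline\partial(\chi\circ f)=0$ a.e.\ or $\partial(\chi\circ f)=0$ a.e., and Weyl's lemma applies. Singular points of $A$ are handled by a punctured-disc and removable-singularity argument.
\end{enumerate}

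\textbf{Step 1 has a smaller gap.} The pointwise rigidity you quote does not apply ``verbatim'' at points where $f(\lambda_0)\in\cR$; it is genuinely false there. At the origin $\kappa_{\GG}(0;(a,b))=|a|/2+|b|$. So $v\mapsto(v,\overline v/2)$ is an isometry from the Poincar\'e norm at $0$, yet it is neither complex linear nor conjugate linear. If $f^{-1}(\cR)$ has positive measure you need a separate argument.

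The paper takes points that are both differentiability points of $f$ and Lebesgue density points of $f^{-1}(\cR)$. With $\Psi=s^2-4p$ this gives $d(\Psi\circ f)_\lambda=0$. Hence $df_\lambda(\CC)=T_{f(\lambda)}\cR$, a complex line, using injectivity of $df_\lambda$ and a dimension count. The isometry condition then makes $df_\lambda$ a similarity onto that line, so it is complex linear or conjugate linear.

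Your remark about geodesics lying in $\cR$ in Step 2 does not cover this. The problem is pointwise and already arises whenever $f^{-1}(\cR)$ has positive measure.
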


The second part concerns the automorphism orbits.  The royal variety
$\cR=\{(2\lambda,\lambda^2):\lambda\in\DD\}$ is the automorphism orbit of the
origin.  Every other orbit is a real three-dimensional hypersurface.  For
$0<r<1$ we write $x_r=(0,r)$ and $\mathcal O_r=\Aut(\GG)\cdot x_r$.

\begin{theorem}\label{thm:orbit}
Let $0<r<1$, let $V\subset\mathcal O_r$ be nonempty, connected and relatively
open, and let $f:V\to\GG$. The following conditions are equivalent:
\begin{enumerate}
\item[(a)] $f$ is $C^1$ and
\[
 \kappa_{\GG}(f(z);df_zv)=\kappa_{\GG}(z;v),
 \qquad z\in V,\quad v\in T_z\mathcal O_r;
\]
\item[(b)]
\[
 k_{\GG}(f(z_1),f(z_2))=k_{\GG}(z_1,z_2),\qquad z_1,z_2\in V;
\]
\item[(c)] $f$ is the restriction to $V$ of an automorphism or an
anti-automorphism of $\GG$.
\end{enumerate}
\end{theorem}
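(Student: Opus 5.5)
The implications (c)$\Rightarrow$(a) and (c)$\Rightarrow$(b) are immediate. Automorphisms of $\GG$ are biholomorphic, and anti-automorphisms are compositions of automorphisms with $(s,p)\mapsto(\bar s,\bar p)$. Both kinds preserve $k_{\GG}$ and $\kappa_{\GG}$ and are smooth. The real content is therefore (a)$\Rightarrow$(c) and (b)$\Rightarrow$(c). I plan to reduce both to a single rigidity statement about maps that preserve the family of extremal discs through points of the orbit.

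Recall that $\Aut(\GG)=\{(z+w,zw)\mapsto(m(z)+m(w),m(z)m(w)) : m\in\Aut(\DD)\}$, so $\Aut(\GG)\cong\Aut(\DD)$. Each nonroyal orbit is three-dimensional, and the stabilizer of $x_r$ is finite. Through every point $z\in\mathcal O_r$ there is a distinguished real family of complex geodesics. In particular there is the flat geodesic $\lambda\mapsto(\beta+\bar\beta\lambda,\lambda)$ through $x_r$, and there are the geodesics hitting $\cR$. I would use the structure of $\kappa_{\GG}$ on $T_z\mathcal O_r$. On the two-dimensional complex tangent space $T_z\mathcal O_r\cap iT_z\mathcal O_r$, the metric is the restriction of a known Finsler norm. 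In the transverse real direction, which is tangent to the orbit of the one-parameter subgroup, $\kappa_{\GG}$ is computable explicitly via the Agler--Young formula $\kappa_{\GG}(z;v)=\sup_{\omega\in\mathbb T}\kappa_{\DD}(\Phi_\omega(z);d\Phi_\omega v)$, where $\Phi_\omega(s,p)=(2\omega p-s)/(2-\omega s)$.

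For (b)$\Rightarrow$(a), I would first show that a $k_{\GG}$-isometry of $V$ is locally Lipschitz and differentiable almost everywhere. Where it is differentiable, $df_z$ preserves $\kappa_{\GG}$ restricted to $T_z\mathcal O_r$, because $k_{\GG}$ is the integrated form of $\kappa_{\GG}$ and $k_{\GG}(z,z+tv)/t\to\kappa_{\GG}(z;v)$ for $v$ tangent to the orbit. To upgrade this to $C^1$, I would use the geodesic structure instead of regularity theory. Points of $V$ joined by a complex geodesic $\varphi$ whose image meets $V$ in an arc are mapped to points realizing Poincaré distances. By Theorem~\ref{thm:disc}, applied to $f\circ\varphi$ on the relatively open set $\varphi^{-1}(V)$ (here I must arrange an open set, e.g.\ by using geodesic images that lie in $\mathcal O_r$ or by thickening), those curves are holomorphic or antiholomorphic discs. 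The difficulty is that $\varphi^{-1}(V)$ is a curve, not an open set. So I would instead choose the royal-type geodesics $\lambda\mapsto$ orbit of $x_r$ under the subgroup $\{m_a\}$, whose images are contained in $\mathcal O_r$. If needed I would check that the leaves of the Levi foliation of $\mathcal O_r$ (it is Levi-flat, being foliated by the complex curves $\{(m(0)+m(\sqrt{r}\,i\dots))\}$) are Kobayashi-isometric to $\DD$.

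The core step is then (a)$\Rightarrow$(c). I would normalize by composing with an automorphism so that $f(z_0)=x_{r'}$ for some $z_0\in V$. Since $\kappa_{\GG}$ at points of different orbits has non-isometric indicatrices (I would compute an invariant of the indicatrix as a function of $r$), $f(V)\subset\mathcal O_r$ and $r'=r$. Next, $df_{z}$ must be a linear isometry of the restricted indicatrix on the real three-space $T_z\mathcal O_r$. I expect the isometry group of this indicatrix to be finite, consisting of the derivatives of the stabilizer together with conjugation. Then $f$ agrees to first order at each point with an element of the discrete set $\Aut(\GG)\cup\overline{\Aut(\GG)}$. Since $\mathcal O_r\cong\Aut(\DD)/\text{finite}$, $f$ becomes a $C^1$ map $g$ on an open subset of the Lie group whose differential is everywhere left translation composed with a fixed element of a finite group. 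By continuity and connectedness that element is constant, so $g$ is a left translation possibly composed with an involution. Hence $f$ is the restriction of a global (anti-)automorphism. The main obstacle is computing the linear isometry group of $\kappa_{\GG}(x_r;\cdot)|_{T_{x_r}\mathcal O_r}$ and proving that it is finite. I would try first to identify the directions where the supremum in the Agler--Young formula is attained at a unique $\omega$, and then use non-smooth points of the indicatrix as rigid markers.
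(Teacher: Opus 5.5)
\textbf{There are genuine gaps.} The most serious one is in your regularity step for (b)$\Rightarrow$(a). For $0<r<1$ the orbit $\mathcal O_r$ is not Levi-flat; it is Levi-nondegenerate. Write $s=x+iy$, $p=r+u+it$, and $\mathcal O_r=\{u=\varphi(x,y,t)\}$ near $x_r$. Then $\varphi(x,y,0)=\frac{1-r}{4(1+r)}x^2-\frac{1+r}{4(1-r)}y^2+o(x^2+y^2)$. The contact form $\theta$ of $H=T\mathcal O_r\cap iT\mathcal O_r$ therefore satisfies $(d\theta)_{x_r}=\frac{2r}{1-r^2}\,dx\wedge dy\neq0$, and by homogeneity this holds at every point of the orbit. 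Consequently $\mathcal O_r$ contains no germ of a complex curve. For instance, the flat geodesic $\lambda\mapsto(0,\lambda)$ meets $\mathcal O_r$ only in the circle $|\lambda|=r$, because $\Delta(0,\lambda)=2\sqrt{|\lambda|}/(1+|\lambda|)$. So there are no ``royal-type'' discs inside the orbit, and Theorem~\ref{thm:disc} never gets an open set to act on. The paper avoids holomorphic discs entirely. It puts on $\mathcal O_r$ the $\Aut(\GG)$-invariant Riemannian metric $g_r$ generated by $g_r^0((a,it),(c,is))=\Re(a\overline c)+ts$ at $x_r$. The tangent classification shows that every linear isometry of the restricted Kobayashi--Royden norms is $g_r$-orthogonal, so $f^*g_r=g_r$ (almost everywhere in case (b)). The low-regularity Myers--Steenrod theorem then makes $f$ a smooth local Riemannian isometry.

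In (a)$\Rightarrow$(c), your guess about the linear isometry group is false, and this hides the real rigidity step. Every norm is invariant under $-\mathrm{id}$, but $-\mathrm{id}$ is not the differential of any automorphism or anti-automorphism fixing $x_r$. In fact, the linear isometries of $\kappa_{\GG}(x_r;\cdot)$ on $T_{x_r}\mathcal O_r=\CC\times i\RR$ are the eight maps $(x+iy,it)\mapsto(\varepsilon_1x+i\varepsilon_2y,i\varepsilon_3t)$. The maps fixing $x_r$, namely $(s,p)\mapsto(\pm s,p)$ and $(s,p)\mapsto(\pm\overline s,\overline p)$, realize only the four with $\varepsilon_1\varepsilon_2\varepsilon_3=1$. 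So local constancy of the finite-group element does not give that $f$ is a translation composed with an involution; you must rule out the odd sign patterns. The paper does this as follows: $dh$ preserves $H=\ker\theta$, so $h^*\theta=\mu\theta$ with $\mu(x_r)=\varepsilon_3$, and restricting $h^*d\theta$ to $H_{x_r}$ forces $\varepsilon_1\varepsilon_2\varepsilon_3=1$. First-jet uniqueness of Riemannian isometries then finishes the argument. In your Lie-group picture, the missing ingredient is the Maurer--Cartan compatibility. Once $f$ is bootstrapped to $C^2$ (its differential is a smooth function of $(z,f(z))$), the constant element must be a Lie-algebra automorphism, and this excludes exactly the odd patterns.

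Your orbit step also has a gap. You compare whole indicatrices at $z$ and $f(z)$, but (a) only says that the norm on $T_z\mathcal O_r$ is the pullback of $\kappa_{\GG}(f(z);\cdot)$ restricted to the a priori arbitrary real three-plane $df_z(T_z\mathcal O_r)$, and $f(z)$ may lie on $\cR$. The paper needs the non-analytic locus $\Sigma_\nu$, the one-sided first variations at $(0,i)$, and a density argument off $\cR$ to force this plane to be $T_{f(z)}\mathcal O_r$ and to get $q=r$.
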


Here and below an anti-automorphism means $\sigma\circ\gamma$, where
$\gamma\in\Aut(\GG)$ and $\sigma(s,p)=(\overline s,\overline p)$.

\section{Basic formulas}

Every automorphism $m$ of $\DD$ induces an automorphism of $\GG$ by
\[
 \gamma_m(z+w,zw)=(m(z)+m(w),m(z)m(w)),
\]
and these are all automorphisms of $\GG$; see \cite{JarnickiPflug2004}.
If $(s,p)=(z+w,zw)\in\GG$, put
\[
 \Delta(s,p)=\left|\frac{z-w}{1-\overline w z}\right|\in[0,1).
\]
It is invariant under
$\Aut(\GG)$. Moreover, the $\Aut(\GG)$-orbits are exactly the level sets of $\Delta$.

For $0\le r<1$ put $x_r=(0,r)$ and $\mathcal O_r=\Aut(\GG)\cdot x_r$. So $\Delta(x_r)=\frac{2\sqrt r}{1+r}$.
This is strictly increasing in $r$.  Thus $\mathcal O_0=\cR$, the sets $\mathcal O_r$,
$0<r<1$, are pairwise disjoint nonroyal orbits, and every point of $\GG$ belongs
to exactly one of them.  
Every $\mathcal O_r$ is relatively closed in $\GG$.
A direct calculation from $s=z+w$ and $p=zw$ gives
\begin{equation}\label{eq:Delta}
 \Delta(s,p)^2=
 \frac{2|s^2-4p|}{2+2|p|^2-|s|^2+|s^2-4p|}.
\end{equation}

For $\omega\in\mathbb T$ let
\[
 \Phi_\omega(s,p)=\frac{2\omega p-s}{2-\omega s}.
\]
The functions $\Phi_\omega$ are Carath\'eodory extremals, and since the
Carath\'eodory--Reiffen and Kobayashi--Royden metrics agree on $\GG$, one obtains
at $x_r$ the formula
\begin{equation}\label{eq:kappa-max}
 \kappa_{\GG}(x_r;(a,b))
 =\frac1{1-r^2}\max_{|\omega|=1}
 \left|\omega b+\frac a2(r\omega^2-1)\right|.
\end{equation}
Equivalently, if $a\ne0$ and $\omega=e^{i\theta}$,
\begin{equation}\label{eq:ellipse}
 \kappa_{\GG}(x_r;(a,b))
 =\frac{|a|}{2(1-r^2)}\max_{\theta\in\RR}
 \left|\frac{2b}{a}-(1-r)\cos\theta+i(1+r)\sin\theta\right|.
\end{equation}
Thus, the norm is obtained by taking the farthest point on the ellipse.

We shall
also use the immediate special cases
\begin{equation}\label{eq:axes}
 \kappa_{\GG}(x_r;(a,0))=\frac{|a|}{2(1-r)},\quad a\in\CC,\qquad
 \kappa_{\GG}(x_r;(0,it))=\frac{|t|}{1-r^2},\quad t\in\RR.
\end{equation}

We have the following equality.
\begin{lemma}\label{lem:metric-derivative}
If $q_t=q+tX+o(t)$ in $\GG$ as $t\to0$ through real values, then
\[
 \lim_{t\to0}\frac{k_{\GG}(q_t,q)}{|t|}=\kappa_{\GG}(q;X).
\]
\end{lemma}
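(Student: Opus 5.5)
The plan is to reduce everything to the base point $x_0=(0,0)$, where both $k_{\GG}$ and $\kappa_{\GG}$ are given explicitly by the extremals $\Phi_\omega$, and then check that the difference quotient of the distance tends to the metric. Since automorphisms preserve both $k_{\GG}$ and $\kappa_{\GG}$, and the condition $q_t=q+tX+o(t)$ is transported by a smooth map $\gamma$ to $\gamma(q_t)=\gamma(q)+t\,d\gamma_q X+o(t)$, I may assume $q=x_r=(0,r)$ for some $0\le r<1$. This uses that every point lies on some orbit $\mathcal O_r$.

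\textbf{Upper and lower bounds via Carath\'eodory.} Write $c_{\GG}(x,y)=\sup_{|\omega|=1}\rho(\Phi_\omega(x),\Phi_\omega(y))$. This equals $k_{\GG}$ because the Carath\'eodory and Kobayashi distances coincide and the $\Phi_\omega$ are extremal; that is the Agler--Young/Costara result behind \eqref{eq:kappa-max}. For fixed $\omega$ the map $t\mapsto\Phi_\omega(q_t)$ is $\Phi_\omega(q)+t\,d\Phi_\omega(q)X+o(t)$. The $o(t)$ is uniform in $\omega$, since $\Phi_\omega$ is holomorphic in $(s,p)$ with coefficients continuous in $\omega$ on the compact set $\mathbb T$, and $q_t$ stays in a compact neighbourhood of $q$. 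The Poincar\'e distance satisfies $\rho(\mu+h,\mu)=|h|/(1-|\mu|^2)+O(|h|^2)$, uniformly for $\mu$ in a compact subset of $\DD$. The values $\Phi_\omega(q')$, for $q'$ near $q$ and $|\omega|=1$, do form such a compact set: $|\Phi_\omega|<1$ on $\GG$, and continuity in $(\omega,q')$ gives the compactness. Hence
\[
 \frac{k_{\GG}(q_t,q)}{|t|}=\sup_{\omega}\frac{|d\Phi_\omega(q)X|}{1-|\Phi_\omega(q)|^2}+o(1).
\]
The supremum on the right is the Carath\'eodory--Reiffen metric, which equals $\kappa_{\GG}(q;X)$. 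At $q=x_r$ this is formula \eqref{eq:kappa-max}, which confirms the computation.

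\textbf{The main obstacle} is justifying that the $o(1)$ is uniform in $\omega$, so that the supremum commutes with the limit. I would handle it by the elementary inequality $|\sup_\omega A_\omega-\sup_\omega B_\omega|\le\sup_\omega|A_\omega-B_\omega|$, combined with the uniform Taylor estimates above; compactness of $\mathbb T$ makes this routine. An alternative, if one prefers to avoid Carath\'eodory distances, would use the Kobayashi side. The upper bound would come from a single extremal disc through $q$ in direction $X$, perturbed to pass through $q_t$. The lower bound would come from $k\ge$ the integrated form of $\kappa$ together with upper semicontinuity. I would take the Carath\'eodory route, however, since it gives both inequalities at once.
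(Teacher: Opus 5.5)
Your proof is correct, but it takes a different route from the paper's.

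\textbf{Your route.} You use the distance-level universality of the Carath\'eodory extremals, namely the Agler--Young formula $k_{\GG}(x,y)=\max_{\omega\in\mathbb T}\rho(\Phi_\omega(x),\Phi_\omega(y))$. You combine it with a first-order expansion of $\rho(\Phi_\omega(q_t),\Phi_\omega(q))$ that is uniform in $\omega$ by compactness of $\mathbb T$. This gives both inequalities in one stroke, and the limit is identified through the infinitesimal formula $\gamma_{\GG}=\kappa_{\GG}$.

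\textbf{The paper's route.} The paper never uses the distance formula; it proves two one-sided bounds.
\begin{itemize}
\item The lower bound $\gamma_{\GG}(q;X)\le\liminf k_{\GG}(q_t,q)/|t|$ comes from contractivity under arbitrary $F\in\mathcal O(\GG,\DD)$.
\item The upper bound $\limsup k_{\GG}(q_t,q)/|t|\le\kappa_{\GG}(q;X)$ comes from measuring the straight segment from $q$ to $q_t$ with $\kappa_{\GG}$, using that $k_{\GG}$ is the integrated form of $\kappa_{\GG}$.
\end{itemize}
Both bounds are general facts. The geometry of $\GG$ enters only through the infinitesimal identity $\gamma_{\GG}=\kappa_{\GG}$.

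\textbf{Comparison.} The paper's argument needs less input and transfers to any domain where the Carath\'eodory--Reiffen and Kobayashi--Royden metrics agree. Yours relies on the stronger global Agler--Young theorem, but in return gives an explicit expansion, uniform in $\omega$.

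\textbf{Minor points.}
\begin{itemize}
\item The reduction to $x_r$ is unnecessary, since your estimate works verbatim at any $q$.
\item Your opening phrase about reducing to $(0,0)$ cannot be carried out, because $\Aut(\GG)$ is not transitive.
\item In your alternative Kobayashi-side sketch the roles are reversed. The integrated form together with upper semicontinuity of $\kappa_{\GG}$ gives the upper bound, which is exactly the paper's segment argument. The lower bound comes from Carath\'eodory contractivity.
\end{itemize}
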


\begin{proof}
For every $F\in\mathcal O(\GG,\DD)$, contractivity of the Carath\'eodory
distance gives
$\rho(F(q_t),F(q))\le k_{\GG}(q_t,q)$.  Dividing by $|t|$, passing to the
limit inferior and taking the supremum over $F$ gives
$\kappa_{\GG}(q;X)=\gamma_{\GG}(q;X)\le\liminf k_{\GG}(q_t,q)/|t|$.
For the reverse inequality join $q$ to $q_t$ by the straight segment.  For
small $t$ it lies in a compact subset of $\GG$, and continuity of
$\kappa_{\GG}$ gives an integrated length
$|t|\kappa_{\GG}(q;X)+o(|t|)$.  Since $k_{\GG}$ is the integrated distance of
$\kappa_{\GG}$, the opposite inequality follows.
\end{proof}

We shall also use the following elementary local comparison.

\begin{lemma}\label{lem:comparison}
Let $D\subset\CC^n$ be bounded and $K\Subset D$ be a relatively compact set.  There are $a_K,b_K>0$ such that, for sufficiently close $z,w\in K$,
\[
 a_K\|z-w\|\le k_D(z,w)\le b_K\|z-w\|.
\]
\end{lemma}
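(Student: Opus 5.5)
The plan is to compare $k_D$ with the Euclidean distance through the infinitesimal Kobayashi--Royden metric $\kappa_D$. The only inputs are two standard facts: Euclidean balls in $D$ carry comparable invariant metrics, and $D$ sits inside a large ball.

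\emph{Upper bound.} Since $K$ is relatively compact, put $\delta=\tfrac12\operatorname{dist}(K,\partial D)>0$. Let $z,w\in K$ with $\|z-w\|<\delta$. Then the segment $[z,w]$ lies in the ball $B(z,2\delta)\subset D$. By the decreasing property of $k$ under the inclusion $B(z,2\delta)\hookrightarrow D$,
\[
k_D(z,w)\le k_{B(z,2\delta)}(z,w).
\]
The Kobayashi distance of a ball is explicit. Alternatively, one can use the holomorphic disc $\lambda\mapsto z+2\delta\lambda\,\frac{w-z}{\|w-z\|}$ and the Lempert function bound $k\le \ell$, which gives
\[
k_D(z,w)\le \rho\Bigl(0,\tfrac{\|w-z\|}{2\delta}\Bigr)=\tanh^{-1}\Bigl(\tfrac{\|w-z\|}{2\delta}\Bigr)\le c\,\frac{\|z-w\|}{2\delta},
\]
because $\|w-z\|/(2\delta)\le 1/2$. (Here I use the normalization $\rho=\tanh^{-1}|\cdot|$ matching the stated infinitesimal metric $|v|/(1-|\lambda|^2)$.) This gives $b_K=c/(2\delta)$ with an absolute constant $c$.

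\emph{Lower bound.} Since $D$ is bounded, $D\subset B(z,R)$ for every $z\in K$, with $R=\operatorname{diam}D$. By monotonicity,
\[
k_D(z,w)\ge k_{B(z,R)}(z,w).
\]
For the ball, contractivity under the holomorphic map $\zeta\mapsto \langle \zeta-z,u\rangle/R$ into $\DD$, where $u=(w-z)/\|w-z\|$, gives
\[
k_{B(z,R)}(z,w)\ge\rho\bigl(0,\|w-z\|/R\bigr)\ge \|w-z\|/R.
\]
The last inequality is $\tanh^{-1}x\ge x$. So $a_K=1/R$ works, in fact for all $z,w\in K$ and even without closeness.

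The only point needing care is the upper bound: one must keep the comparison disc inside $D$, and this is exactly what the restriction to sufficiently close points (namely $\|z-w\|<\delta$) provides. One could instead argue via continuity and positivity of $\kappa_D$ on $\overline K\times\{\|X\|=1\}$ and integrate along segments, as in Lemma~\ref{lem:metric-derivative}. The explicit disc argument above avoids any regularity issues and is the route I would take.
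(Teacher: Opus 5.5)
Your proof is correct, but it makes the comparison at a different level than the paper does. The paper works infinitesimally. It uses the same linear functional $\zeta\mapsto\langle\zeta,v/\|v\|\rangle/R$ to get $\kappa_D(z;v)\ge\|v\|/R$. Monotonicity from balls of radius $\delta/4$ gives $\kappa_D(z;v)\le 4\|v\|/\delta$ near $K$. Both bounds are then integrated along curves, which uses that $k_D$ is the integrated form of $\kappa_D$. For the upper bound this is the nontrivial half of Royden's theorem: $k_D$ is at most the $\kappa_D$-length of a curve.

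You stay at the level of distances instead. For the lower bound you use contractivity of $k$ under the functional into $\DD$. For the upper bound you use a single affine analytic disc, i.e.\ $k_D\le\ell_D$ (the Lempert function), together with the explicit estimates $x\le\tanh^{-1}x\le(\log 3)\,x$ on $[0,1/2]$. This needs only the definition and contractivity of $k_D$, not Royden's theorem. It also gives explicit constants $a_K=1/R$ and $b_K=\log 3/\operatorname{dist}(K,\partial D)$, and shows the lower bound holds for all $z,w\in D$. The paper's version has the advantage of fitting the integration framework already used in Lemma~\ref{lem:metric-derivative}.

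A small remark on your closing aside: on an arbitrary bounded domain, $\kappa_D$ is in general only upper semicontinuous, not continuous. That alternative route should therefore be phrased with upper semicontinuity, which suffices for the upper bound, while the lower bound on $\kappa_D$ comes from boundedness as in your argument. Since you do not use that route, it does not affect your proof.
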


\begin{proof}
Choose $R$ with $D\subset R\mathbb B^n$.  For $v\ne0$ the linear function
$z\mapsto\langle z,v/\|v\|\rangle/R$ maps $D$ to $\DD$, hence
$\kappa_D(z;v)\ge\|v\|/R$.  Integration gives the lower bound.  If
$\delta=\operatorname{dist}(K,\partial D)$, then for points sufficiently near $K$ the balls
of radius $\delta/4$ centered at those points lie in $D$.  Monotonicity of the
Kobayashi--Royden metric gives $\kappa_D(z;v)\le4\|v\|/\delta$ there, and
integration along the segment from $x$ to $y$ gives the upper bound.
\end{proof}

If a real differentiable map $f:\DD\to\CC^2$ is written as $f=(f_1,f_2)$, then 
$$
df_{\lambda}(v)=v f_z(\lambda)+\overline v f_{\bar z}(\lambda),\quad v\in\CC,
$$
where $f_z=(\frac{\partial f_1}{\partial z},\frac{\partial f_2}{\partial z})$ and
$f_{\bar z}=(\frac{\partial f_1}{\partial \bar z},\frac{\partial f_2}{\partial\bar z})$.

\section{Proof of Theorem~\ref{thm:disc}}

We divide the proof into three steps.

\medskip

\noindent
\textbf{Step 1.}
\emph{The map $f$ is locally Euclidean bi-Lipschitz and, for almost every
$\lambda\in U$, the differential $df_\lambda$ is either complex-linear or
conjugate-linear.}

\medskip

Injectivity follows immediately from distance preservation.
Lemma~\ref{lem:comparison}, together with the local equivalence of the
Poincar\'e and Euclidean distances, shows that $f$ is locally Euclidean
bi-Lipschitz. Hence, by Rademacher's theorem, $f$ is differentiable almost
everywhere.

Let $\lambda\in U$ be a differentiability point. By
Lemma~\ref{lem:metric-derivative}, for every $v\in\CC$,
\begin{equation}\label{eq:disc-inf}
 \kappa_{\GG}(f(\lambda);df_\lambda v)
 =
 \frac{|v|}{1-|\lambda|^2}.
\end{equation}
In particular, $df_\lambda$ is injective.

Suppose first that $f(\lambda)\notin\cR$. Since
\[
 df_\lambda(v)
 =
 v f_z(\lambda)+\overline v\,f_{\bar z}(\lambda),
 \qquad v\in\CC,
\]
for $|v|=1$, complex homogeneity of $\kappa_{\GG}$ and
\eqref{eq:disc-inf} give
\[
 \kappa_{\GG}\left(
 f(\lambda);
 f_z(\lambda)+\overline v^{\,2}f_{\bar z}(\lambda)
 \right)
 =
 \frac1{1-|\lambda|^2}.
\]
As $\overline v^{\,2}$ runs through $\mathbb T$, the pointwise rigidity
calculation from \cite{Edigarian2024} yields
\[
 f_z(\lambda)=0
 \qquad\text{or}\qquad
 f_{\bar z}(\lambda)=0.
\]

It remains to consider almost every point of $f^{-1}(\cR)$.
Let $\lambda\in f^{-1}(\cR)$ be both a differentiability point of $f$
and a Lebesgue density point of $f^{-1}(\cR)$. Put
\[
 \Psi(s,p)=s^2-4p,
\]
so that $\cR=\{\Psi=0\}$. Since $\Psi\circ f$ vanishes on a set of
density one at $\lambda$, differentiability gives
\[
 d(\Psi\circ f)_\lambda=0.
\]
Hence
\[
 df_\lambda(\CC)
 \subset
 \ker d\Psi_{f(\lambda)}
 =
 T_{f(\lambda)}\cR.
\]
Both sides have real dimension two, because $df_\lambda$ is injective.
Therefore
\[
 df_\lambda(\CC)=T_{f(\lambda)}\cR.
\]
Choose a nonzero complex tangent vector $e\in T_{f(\lambda)}\cR$ and
write
\[
 df_\lambda(v)=e\,\ell(v),
\]
where $\ell:\CC\to\CC$ is real linear. By complex homogeneity of
$\kappa_{\GG}$, equation~\eqref{eq:disc-inf} implies
\[
 |\ell(v)|=c|v|
\]
for some $c>0$. Thus $\ell$ is a similarity of the Euclidean plane,
and therefore is either complex-linear or conjugate-linear. The same
is consequently true of $df_\lambda$.

We have proved that, for almost every $\lambda\in U$,
$df_\lambda$ is either complex-linear or conjugate-linear.

\medskip

\noindent
\textbf{Step 2.}
\emph{The image $f(U)$ is locally contained in a one-dimensional complex
analytic set.}

\medskip

Fix $\lambda_0\in U$. Choose concentric discs
\[
 B'\Subset B\Subset U
\]
centered at $\lambda_0$ such that $f$ is bi-Lipschitz on a neighborhood
of $\overline B$. Since $f$ is injective on $\overline B$, we may choose
an open neighborhood $W$ of $f(\overline{B'})$ such that
\[
 W\cap f(\partial B)=\varnothing.
\]

Let $\llbracket B\rrbracket$ denote the integral $2$-current defined by
integration over $B$ with its standard orientation. Since $f$ is Lipschitz on
a neighborhood of $\overline B$, its push-forward
$f_\#\llbracket B\rrbracket$ is well defined. We regard its restriction to
test forms supported in $W$ as a current $T$ on $W$; thus, for every smooth
compactly supported $2$-form $\varphi$ on $W$,
\[
 T(\varphi)
 =
 (f_\#\llbracket B\rrbracket)(\varphi)
 =
 \int_B f^*\varphi.
\]
See \cite[Section 4.1.14]{Federer1969} for the Lipschitz push-forward of currents and the identity
$\partial(f_\#S)=f_\#(\partial S)$. Hence, for every smooth compactly supported
$1$-form $\psi$ on $W$,
\[
 \partial T(\psi)
 =
 (f_\#\partial\llbracket B\rrbracket)(\psi)
 =
 \int_{\partial B} f^*\psi
 =0,
\]
because $W\cap f(\partial B)=\varnothing$. Thus $T$ is closed in $W$.
Since $f$ is bi-Lipschitz, $T$ is a locally integral rectifiable
$2$-current. Moreover, by Step~1, for almost every $\lambda\in B$
the plane $df_\lambda(\CC)$ is a complex line. Hence $T$ is of type
$(1,1)$. Indeed, every form of type $(2,0)$ or $(0,2)$ vanishes on a
one-dimensional complex vector space, and therefore its pullback by
$f$ vanishes almost everywhere on $B$.

The support of $T$ has locally finite $\mathcal H^2$-measure, since
$f$ is Lipschitz. In particular,
\[
 \mathcal H^3(\supp T)=0.
\]
The theorem of Harvey--Shiffman
\cite[Theorem~2.1]{HarveyShiffman1974} therefore implies that $T$ is
a holomorphic $1$-chain in $W$. Consequently $\supp T$ is a
one-dimensional complex analytic set in $W$.

Finally,
\[
 f(B')\subset\supp T.
\]
Indeed, let $\lambda\in B'$ and let $V\Subset W$ be any neighborhood
of $f(\lambda)$. Then $f^{-1}(V)$ contains a neighborhood of $\lambda$.
Since $f$ is bi-Lipschitz, $f(B)\cap V$ has positive
$\mathcal H^2$-measure. Hence the current $T$ does not vanish in $V$,
so $f(\lambda)\in\supp T$.

Thus $f(B')$ is contained in a one-dimensional complex analytic set.
Since $\lambda_0$ was arbitrary, the assertion follows.

\medskip

\noindent
\textbf{Step 3.}
\emph{The map $f$ is holomorphic or anti-holomorphic on $U$.}

\medskip

Fix $\lambda_0\in U$. By Step~2, after shrinking to a disc
$D\Subset U$ centered at $\lambda_0$, there is a one-dimensional complex
analytic set $A$ in a neighborhood of $f(D)$ such that $f(D)\subset A$.
The singular set of $A$ is discrete. Since $f$ is injective, after shrinking
$D$ once more we may arrange that either $f(D)$ lies in the regular part of
$A$, or $f(\lambda_0)$ is the only singular point of $A$ met by $f(D)$.

First suppose that $f(D)$ lies in the regular part of $A$. Choose a
holomorphic coordinate $\chi$ on $A$ near $f(\lambda_0)$ and shrink $D$ so that
$\chi$ is defined on $f(D)$. Then
\[
 g:=\chi\circ f:D\longrightarrow\CC
\]
is an injective locally bi-Lipschitz map. By invariance of domain, $g(D)$ is
open and $g:D\to g(D)$ is a homeomorphism. Hence $g$ is either orientation
preserving or orientation reversing.

By Step~1, at almost every point of $D$ the differential $dg$ is either
complex-linear or conjugate-linear. If $g$ preserves orientation, the first
alternative holds almost everywhere, so $g_{\bar z}=0$ almost everywhere. If
$g$ reverses orientation, then $g_z=0$ almost everywhere. Since $g$ is locally
Lipschitz, these identities hold in the sense of distributions, and Weyl's
lemma implies that $g$ is respectively holomorphic or anti-holomorphic. Since
$\chi^{-1}$ is holomorphic, the same is true of $f$ on $D$.

If $f(\lambda_0)$ is singular, apply the preceding argument on the punctured
disc $D\setminus\{\lambda_0\}$. This punctured disc is connected, so the local
holomorphic or anti-holomorphic alternatives agree on overlaps and give one
alternative on all of $D\setminus\{\lambda_0\}$. Since $f$ is continuous at
$\lambda_0$, the removable-singularity theorem extends the same alternative
across $\lambda_0$.

Thus every point of $U$ has a neighborhood on which $f$ is either holomorphic
or anti-holomorphic. These two alternatives cannot change on an overlap,
because $df$ is injective wherever it exists. Since $U$ is connected, one
alternative holds on all of $U$. This proves Theorem~\ref{thm:disc}.

\section{Proof of Theorem~\ref{thm:orbit}}

We prove the equivalence in three steps.

\medskip

\noindent
\textbf{Step 1.}
\emph{Assume \textup{(a)}. Then $f(V)\subset\mathcal O_r$}

\medskip

The infinitesimal equality implies that
$df_z$ has real rank three at every $z\in V$. Hence $f$ cannot
map a relatively open subset of $V$ into the royal variety $\cR$, which has
real dimension two. Thus the points $z$ for which
$f(z)\notin\cR$ are dense in $V$.

Take first such a point $z_0$, write $f(z_0)\in\mathcal O_q$, and
choose automorphisms $\alpha,\beta$ of $\GG$ such that
\[
 \alpha(z_0)=x_r,
 \qquad
 \beta(f(z_0))=x_q.
\]
Then
\[
 L=d\beta_{f(z_0)}\circ df_{z_0}\circ
 d\alpha^{-1}_{x_r}:\CC\times i\RR\longrightarrow\CC^2
\]
is real linear and satisfies
\[
 \kappa_{\GG}(x_r;v)=\kappa_{\GG}(x_q;L(v)),
 \qquad v\in\CC\times i\RR.
\]
We determine $L$ directly from formula~\eqref{eq:kappa-max}.

For $0<\nu<1$ put $F_\nu(a,b)=\kappa_{\GG}(x_\nu;(a,b))^2$. If $a\ne0$, write
$b/a=u+iv$ and put $\alpha_\nu=(1-\nu)/2$, $\beta_\nu=(1+\nu)/2$. By
\eqref{eq:ellipse}, up to the positive factor $|a|^2/(1-\nu^2)^2$, the function
$F_\nu$ is
\[
 H_\nu(u,v)=\max_{\theta\in\RR}
 \bigl((u-\alpha_\nu\cos\theta)^2+(v+\beta_\nu\sin\theta)^2\bigr).
\]
If $v\ne0$, choosing the sign of $\sin\theta$ which maximizes the expression
reduces the problem, with $c=\cos\theta$, to maximizing
\[
 u^2+v^2+\beta_\nu^2-\nu c^2-2\alpha_\nu u c
 +2\beta_\nu|v|\sqrt{1-c^2}
\]
on $[-1,1]$. Its second derivative on $(-1,1)$ is
\[
 -2\nu-\frac{2\beta_\nu|v|}{(1-c^2)^{3/2}}<0.
\]
Thus the maximizer is unique and nondegenerate. If $v=0$, one obtains
\[
 H_\nu(u,0)=
 \begin{cases}
 \displaystyle \beta_\nu^2\left(1+\frac{u^2}{\nu}\right),
 & |u|\le \dfrac{2\nu}{1-\nu},\\[5pt]
 \displaystyle (|u|+\alpha_\nu)^2,
 & |u|\ge \dfrac{2\nu}{1-\nu}.
 \end{cases}
\]
Consequently the non-real-analytic locus of $F_\nu$, away from the origin, is
\begin{equation}\label{eq:Sigma}
\Sigma_\nu
=
\{(0,b):b\ne0\}
\cup
\left\{
(a,b):a\ne0,
\ \Im(b\overline a)=0,
\ |b|\le\frac{2\nu}{1-\nu}|a|
\right\}.
\end{equation}
Moreover, for $z=x+iy$ with $x,y\in\RR$,
\begin{equation}\label{eq:vertical-first-variation}
(1-\nu^2)^2
F_\nu(\varepsilon z,i(1+\varepsilon\tau))
=
1+\varepsilon
\left(
2\tau+
\sqrt{(1+\nu)^2x^2+(1-\nu)^2y^2}
\right)
+O(\varepsilon^2),
\end{equation}
and, if $s,t\in\RR$, $s\ne0$ and $|t|<2\nu|s|/(1-\nu)$,
\begin{equation}\label{eq:wedge-quadratic}
F_\nu(is,it)
=
\frac1{4(1-\nu)^2}
\left(s^2+\frac{t^2}{\nu}\right).
\end{equation}
At $|t|=2\nu|s|/(1-\nu)$ the two analytic branches meet.

Put $E=\CC\times i\RR$ and $W=L(E)$. Since
$F_r=(F_q|_W)\circ L$, the non-real-analytic locus is preserved by $L$.
If the one-sided directional derivative of a function $G$ at $\xi$ exists,
put
\[
 D_\xi G(h)=\lim_{\varepsilon\downarrow0}
 \frac{G(\xi+\varepsilon h)-G(\xi)}{\varepsilon},
 \qquad
 N_\xi G(h)=\frac12\bigl(D_\xi G(h)+D_\xi G(-h)\bigr).
\]
At $\xi=(0,i)$, formula~\eqref{eq:vertical-first-variation} gives, up to a
positive constant,
\[
 N_\xi F_r(z,i\tau)=
 \sqrt{(1+r)^2(\Re z)^2+(1-r)^2(\Im z)^2},
\]
whose kernel is exactly the vertical line. At a nonvertical point of
$\Sigma_q$, the preceding maximization shows that, after restriction to the
real three-plane $W$, the first variation is either linear or the maximum of
two linear forms. Its even part has kernel of real dimension at least two.
Therefore
\[
 L(\{0\}\times i\RR)\subset\{0\}\times\CC.
\]
After multiplying both target components by a common unimodular constant we
may write
\[
 L(0,it)=(0,idt),\qquad d>0,
\]
and~\eqref{eq:axes} gives $d=\frac{1-q^2}{1-r^2}$.
Since every nonzero point of $\CC\times\{0\}$ belongs to $\Sigma_r$, the
real two-plane $L(\CC\times\{0\})$ is contained in $\Sigma_q$ and has no
nonzero vertical vector. Hence
\[
 L(a,0)=(A(a),\ell A(a)),
\]
where $A:\CC\to\CC$ is a real-linear isomorphism and
$\ell\in\RR$, $|\ell|\le2q/(1-q)$. For small $t$ the vector
$(is,it)$ belongs to $\Sigma_r$, and therefore
\[
 A(i)=i\gamma,
 \qquad
 L(is,it)=\bigl(i\gamma s,i(\ell\gamma s+dt)\bigr)
\]
for some nonzero $\gamma\in\RR$. The equality
$|\ell|=2q/(1-q)$ is impossible, because then
$t\mapsto L(i,it)$ crosses the branch transition at $t=0$, whereas
$t\mapsto F_r(i,it)$ is real analytic there by
\eqref{eq:wedge-quadratic}. Thus $|\ell|<2q/(1-q)$, and on a sufficiently
small cone formula~\eqref{eq:wedge-quadratic} gives
\[
\frac1{4(1-r)^2}
\left(s^2+\frac{t^2}{r}\right)
=
\frac1{4(1-q)^2}
\left(
\gamma^2s^2+\frac{(\ell\gamma s+dt)^2}{q}
\right).
\]
The $st$-coefficient gives $\ell=0$. Hence
$L(\CC\times\{0\})=\CC\times\{0\}$.

Write $L(a,0)=(A(a),0)$. By~\eqref{eq:axes},
\[
 |A(a)|=\frac{1-q}{1-r}|a|,
 \qquad
 d=\frac{1-q^2}{1-r^2}.
\]
Thus $A=cO$, where $c=(1-q)/(1-r)$ and $O$ is real orthogonal. Since
$A(i)$ is purely imaginary,
\[
 A(a)=\pm ca\quad\hbox{or}\quad A(a)=\pm c\overline a.
\]
Comparing the $t^2$-coefficients in the quadratic identity gives
\[
 q(1+r)^2=r(1+q)^2,
\]
hence $(q-r)(1-rq)=0$. Therefore $q=r$, and $c=d=1$. Undoing the
unimodular normalization, we obtain
\[
 L(z,it)=e^{i\theta}(z,\varepsilon it)
 \quad\hbox{or}\quad
 L(z,it)=e^{i\theta}(\overline z,\varepsilon it)
\]
for some $\theta\in\RR$ and $\varepsilon\in\{1,-1\}$.

Thus $f(z)\in\mathcal O_r$ whenever $f(z)\notin\cR$. Such points
are dense in $V$, and $\mathcal O_r$ is relatively closed in $\GG$.
Consequently
\[
 f(V)\subset\mathcal O_r.
\]

\medskip

\noindent
\textbf{Step 2.}
\emph{Condition \textup{(a)} implies condition \textup{(c)}.}

\medskip

Since $f(V)\subset\mathcal O_r$ and $df_z$ is an isomorphism of real
three-dimensional tangent spaces, $f$ is a local $C^1$ diffeomorphism of
$\mathcal O_r$ and hence is locally bi-Lipschitz.

At $x_r$ define
\[
 g_r^0((a,it),(c,is))=\Re(a\overline c)+ts.
\]
The tangent classification in Step~1 shows that the differential of every
automorphism fixing $x_r$ is orthogonal for $g_r^0$. If
$\gamma(x_r)=x\in\mathcal O_r$, set
\[
 g_r|_x(u,v)=g_r^0\bigl(d(\gamma^{-1})_xu,d(\gamma^{-1})_xv\bigr).
\]
This is independent of the choice of $\gamma$ and defines a smooth
Riemannian metric on $\mathcal O_r$. Again by Step~1, every real-linear
isometry between the restricted Kobayashi--Royden tangent norms preserves the
corresponding inner products $g_r$. Hence $f^*g_r=g_r$. The
Myers--Steenrod theorem, in the form of
\cite[Theorem~2.2]{MatveevTroyanov2017}, implies that $f$ is smooth and is a
local Riemannian isometry.

Fix now $z_0\in V$. Choose an automorphism $A_0$ sending
$f(z_0)$ to $z_0$ and an automorphism $C$ sending
$z_0$ to $x_r$, and put
\[
 h=C\circ A_0\circ f\circ C^{-1}.
\]
Then $h(x_r)=x_r$. By Step~1,
\[
 dh_{x_r}(v_1,v_2)=
 \bigl(\varepsilon_1\Re v_1+i\varepsilon_2\Im v_1,
       \varepsilon_3v_2\bigr),
 \qquad \varepsilon_j\in\{1,-1\}.
\]
We only have to determine which sign patterns can occur.

Write $s=x+iy$ and $p=r+u+it$. Formula~\eqref{eq:Delta} gives
$\mathcal O_r$ near $x_r$ as a graph
$u=\varphi(x,y,t)$ with $\varphi(0)=0$ and $d\varphi(0)=0$. Thus
$T_{x_r}\mathcal O_r=\CC\times i\RR$. If
\[
 H_x=T_x\mathcal O_r\cap iT_x\mathcal O_r,
\]
then on this graph
\[
 H=\ker\theta,
 \qquad
 \theta=dt-\varphi_x\,dy+\varphi_y\,dx+\varphi_t\,d\varphi.
\]
Putting $t=0$ in the level equation and comparing quadratic terms gives
\[
 \varphi(x,y,0)=
 \frac{1-r}{4(1+r)}x^2-
 \frac{1+r}{4(1-r)}y^2+o(x^2+y^2).
\]
Hence
\[
 \theta_{x_r}=dt,
 \qquad
 (d\theta)_{x_r}=\frac{2r}{1-r^2}\,dx\wedge dy.
\]
At every nearby point the tangent classification from Step~1 shows that
$dh$ preserves $H$. Therefore $h^*\theta=\mu\theta$ for a nonvanishing smooth
function $\mu$, and $\mu(x_r)=\varepsilon_3$. Differentiating and restricting
to $H_{x_r}$ gives
\[
 h^*(d\theta)_{x_r}=\varepsilon_3(d\theta)_{x_r}.
\]
Since $(d\theta)_{x_r}$ is a nonzero multiple of $dx\wedge dy$, it follows
that
\[
 \varepsilon_1\varepsilon_2\varepsilon_3=1.
\]
The four possible sign patterns are realized by
\[
 (s,p)\mapsto(s,p),\qquad
 (s,p)\mapsto(-s,p),\qquad
 (s,p)\mapsto(\overline s,\overline p),\qquad
 (s,p)\mapsto(-\overline s,\overline p).
\]
Thus there is an automorphism or anti-automorphism $B$ of $\GG$, fixing
$x_r$, such that
\[
 dB_{x_r}=dh_{x_r}
 \quad\hbox{on }T_{x_r}\mathcal O_r.
\]
Then $B^{-1}\circ h$ is a local $g_r$-isometry fixing $x_r$ with identity
differential. A local Riemannian isometry is determined by its value and its
differential at one point, for instance by the exponential map. Hence
$B^{-1}\circ h$ is the identity near $x_r$.

We have proved that $f$ agrees near $z_0$ with the restriction of a
global automorphism or anti-automorphism $A$ of $\GG$. Fix this $A$ and let
$S\subset V$ be the set of points having a neighborhood on which $f=A$.
Then $S$ is nonempty and open. If $z_j\in S$ and
$z_j\to z\in V$, then
$f(z)=A(z)$ and $df_z=dA_z$. The same first-jet
uniqueness gives $f=A$ near $z$. Hence $S$ is closed. Since $V$ is
connected, $S=V$. This proves \textup{(a)}$\Rightarrow$\textup{(c)}.

\medskip

\noindent
\textbf{Step 3.}
\emph{Condition \textup{(b)} implies condition \textup{(a)}, and
\textup{(c)} implies both \textup{(a)} and \textup{(b)}.}

\medskip

Assume \textup{(b)}. By Lemma~\ref{lem:comparison}, $f$ is locally Euclidean
bi-Lipschitz on the real hypersurface $V$, and hence is differentiable almost
everywhere. Let $z$ be a differentiability point and
$v\in T_z\mathcal O_r$. Choose a $C^1$ curve $\eta$ in
$\mathcal O_r$ with $\eta(0)=z$ and $\eta'(0)=v$. Applying
Lemma~\ref{lem:metric-derivative} to $\eta$ and $f\circ\eta$ gives
\[
 \kappa_{\GG}(f(z);df_z v)
 =\kappa_{\GG}(z;v).
\]
Thus the infinitesimal equality holds almost everywhere, and $df_z$
has rank three almost everywhere. As before, the points at which
$f(z)\notin\cR$ are dense. At every such differentiability point the
tangent calculation of Step~1 applies and shows that
$f(z)\in\mathcal O_r$. By continuity and the relative closedness of
$\mathcal O_r$,
\[
 f(V)\subset\mathcal O_r.
\]

With the Riemannian metric $g_r$ constructed in Step~2, we have
$f^*g_r=g_r$ almost everywhere. The low-regularity Myers--Steenrod theorem
\cite[Theorem~2.2]{MatveevTroyanov2017} now shows that $f$ is smooth and is a
local Riemannian isometry. Consequently the Kobayashi--Royden equality holds
everywhere, so \textup{(a)} holds. By Step~2, \textup{(c)} follows.

Finally, every automorphism and anti-automorphism of $\GG$ preserves both the
Kobayashi distance and the Kobayashi--Royden metric. Hence
\textup{(c)} implies both \textup{(a)} and \textup{(b)}. This proves
Theorem~\ref{thm:orbit}.

\end{document}